\documentclass[
final
]{dmtcs-episciences}


\usepackage[utf8]{inputenc}
\usepackage{subfigure}
\usepackage{hyperref}
\usepackage{amsfonts,amssymb,amsmath,amsthm,epsfig,euscript}
\usepackage{graphicx}

\newcommand{\inv}[1]{\mathrm{inv}(#1)}
\newcommand{\coinv}[1]{\mathrm{coinv}(#1)}

\newtheorem{theorem}{Theorem}

\newtheorem{corollary}[theorem]{Corollary}
\newtheorem{definition}[theorem]{Definition}
\newtheorem{conjecture}[theorem]{Conjecture}

%


\author{Ran Pan\affiliationmark{1}
	\and Jeffrey B. Remmel\affiliationmark{2}}
\title[Asymptotics for minimal overlapping patterns]{Asymptotics for minimal overlapping patterns for generalized 
	Euler permutations, standard tableaux of 
	rectangular shapes, and column strict arrays}
\affiliation{
	Department of Mathematics, UCSD
	La Jolla, CA, 92093-0112, r1pan@ucsd.edu \\
	Department of Mathematics, UCSD
	La Jolla, CA, 92093-0112, jremmel@ucsd.edu }
\keywords{permutations, arrays, minimal overlapping patterns}
\received{2015-10-29}
\revised{2016-02-26}
\accepted{2016-3-24}

\begin{document}
	
\publicationdetails{18}{2016}{2}{6}{1315}

\maketitle

\begin{abstract} A permutation $\tau$ in the symmetric group $S_j$ 
is minimally overlapping if any two consecutive occurrences of 
$\tau$ in a permutation $\sigma$ can share at most one element. B\'ona 
showed that the proportion of minimal overlapping patterns 
in $S_j$ is at least $3 -e$.  Given a permutation 
$\sigma$, we let $\text{Des}(\sigma)$ denote the set of descents of $\sigma$. We study 
the class of 
permutations $\sigma \in S_{kn}$ whose descent set is contained in the 
set $\{k,2k, \ldots (n-1)k\}$. For example, 
up-down permutations in $S_{2n}$ are the set of permutations 
whose descent equal $\sigma$ such that $\text{Des}(\sigma) = \{2,4, \ldots, 2n-2\}$. 
There are natural analogues of the minimal overlapping permutations 
for such classes of permutations and we study the proportion of 
minimal overlapping patterns for each such class.  We show that the
proportion of minimal overlapping permutations in such classes 
approaches $1$ as $k$ goes to infinity.    We also study 
the proportion of minimal overlapping patterns in standard Young tableaux 
of shape $(n^k)$. 
\end{abstract}

\section{Introduction}
Let $S_n$ denote the set of permutations of $[n]=\{1, \ldots, n\}$. 
We let $k[n] = \{k,2k, \ldots, nk\}$ if $k,n \geq 1$. If 
$\sigma = \sigma_1 \ldots \sigma_n$ is an element of $S_n$, we 
let $\text{Des}(\sigma)=\{i:\sigma_i > \sigma_{i+1}\}$. For any $k,n \geq 1$,  
$\mathcal{C}_{= k[n-1]}$ denote the set of all $\sigma$ in $S_{kn}$ such that $\text{Des}(\sigma) = k[n-1]$ and $\mathcal{C}_{\subseteq k[n-1]}$ denote the set of all $\sigma$ in $S_{kn}$ such that $\text{Des}(\sigma) \subseteq k[n-1]$. 
For example, elements of $\mathcal{C}_{= 2[n-1]}$ are permutations 
$\sigma = \sigma_1 \sigma_2 \ldots \sigma_{2n} \in S_{2n}$ such that 
$$\sigma_1< \sigma_2> \sigma_3 < \sigma_4> \cdots .$$
These are called up-down permutations or even alternating permutations.

One way to think of elements in $\mathcal{C}_{\subseteq k[n-1]}$ is 
as column strict arrays which were studied by  
Harmse and Remmel in \cite{H} and \cite{HR}. 
A column-strict array $P$ is a filling of a $k\times n$ rectangular array with $1,2,\cdots,kn$ such that elements increase from bottom to top in each column. 
Let $\mathcal{F}_{n,k}$ denote the set of all the column-strict arrays with $n$ columns and $k$ rows. Given a $P \in \mathcal{F}_{n,k}$, 
we let $P[i,j]$ denote element in the $i$th column and $j$th row 
where the columns are labeled from left to right and the rows are labeled 
from bottom to top. We let $\sigma_P$ denote 
the permutation 
$$\sigma_p =P[1,1]P[1,2]\ldots P[1,k]P[2,1]P[2,2]\ldots P[2,k] \ldots 
P[n,1]P[n,2]\ldots P[n,k].$$ 
For example, if $P$ is the element of $\mathcal{F}_{4,3}$ pictured 
in Figure \ref{fig:43array}, then 
$\sigma_P = 3~11~12~1~2~4~6~8~10~5~7~9$. This given, 
it is easy to see that $\sigma_P \in  \mathcal{C}_{=k[n-1]}$ if 
and only if $P[i,k]>P[i+1,1]$ for all $1 \leq i \leq n-1$.  
An array $P \in \mathcal{F}_{n,k}$ is a {\em standard tableau} of 
shape $(n^k)$ if $P$ is strictly increasing in rows, reading 
from left to right. That is, $P$ is strictly increasing in rows 
if for all $1 \leq j \leq k$, 
$$P[1,j]<P[2,j]< \cdots < P[n,j].$$
We let $\mathcal{ST}(n^k)$ denote the set of all standard tableaux of shape $(n^k)$. 

\begin{figure}[htbp]
	\begin{center}
		\includegraphics[width = 60pt, height = 45pt]{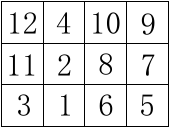}
		\caption{An element of $\mathcal{F}_{4,3}.$}
		\label{fig:43array}
	\end{center}
\end{figure}

Given any sequence of pairwise distinct positive integers, 
$\alpha = \alpha_1 \ldots \alpha_n$, we let $\text{red}(\alpha)$ 
denote the permutation of $S_n$ that results from $\alpha$ by 
replacing the $i$th smallest element of $\alpha$ by $i$ for $i=1, 
\ldots ,n$. For example, $\text{red}(3~6~8~2~9) = 2~3~4~1~5$. 
Given a permutation $\tau \in S_j$ and permutation 
$\sigma = \sigma_1 \ldots \sigma_n \in S_n$, we say that 
\begin{enumerate}
\item  $\tau$ {\bf occurs} in $\sigma$ if there are
$1 \leq i_1 < i_2 < \cdots < i_j \leq n$ such that
$\text{red}(\sigma_{i_1} \ldots \sigma_{i_j}) = \tau$,

\item $\sigma$ {\bf avoids} 
$\tau$ if there is no occurrence of $\tau$ in $\sigma$, and   

\item there is a {\bf $\tau$-match  in $\sigma$ starting at
position $i$} if $\text{red}(\sigma_i \sigma_{i+1} \ldots \sigma_{i+j-1}) = \tau$.
\end{enumerate}
We let $\tau\text{-mch}(\sigma)$ denote the number of $\tau$-matches in 
$\sigma$.  We say that $\tau$ {\em has the minimal overlapping property} or $\tau$ is {\em minimally overlapping} if the smallest $n$ such that 
there exists $\sigma \in S_n$ such that $\tau\text{-mch}(\sigma)=2$ is $2j-1$. This means 
in any two consecutive $\tau$-matches in a permutation $\sigma$ can share 
at most one element which must necessarily be at the end of the 
 first $\tau$-match and the start of the second $\tau$-match. 
It follows that if $\tau \in S_j$ is minimally overlapping, 
then the smallest $n$ such that there exist a $\sigma \in S_n$ such 
that $\tau\text{-mch}(\sigma) =k$ is $k(j-1)+1$.  We call these particular elements of $S_{k(j-1)+1}$ maximum packings for $\tau$ and we let
$\mathcal{MP}_{\tau,k(j-1)+1} = \{\sigma \in S_{k(j-1)+1}:\tau\text{-mch}(\sigma) = k\}$. 
We let $\text{mp}_{\tau,k(j-1)+1} =|\mathcal{MP}_{\tau,k(j-1)+1}|$

Minimal overlapping permutations are nice in that we have 
a nice expression for the generating function of 
$x^{\tau\text{-mch}(\sigma)}$ over permutations. That is, 
Duane and Remmel \cite{DR} proved the following theorem.

\begin{theorem}\label{thm:DR}
If $\tau \in S_j$ has the minimal overlapping property, then 
\begin{multline}
\sum_{n \geq 0} \frac{t^n}{n!} \sum_{\sigma \in S_n} x^{\tau\text{-mch}(\sigma)}  
p^{\coinv{\sigma}} q^{\inv{\sigma}}
=  \\ \frac{1}{1 -(t+ \sum_{n \geq 1} \frac{t^{n(j-1)+1}}{[n(j-1)+1]_{p,q}!} 
(x-1)^{n} \text{mp}_{\tau,n(j-1)+1}(p,q))}. 
\end{multline}
\end{theorem}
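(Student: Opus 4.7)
The plan is to apply a cluster-method argument in the style of Goulden and Jackson, adapted to the $(p,q)$-weighting. The starting point is the binomial expansion
\begin{equation*}
x^{\tau\text{-mch}(\sigma)} = \bigl(1+(x-1)\bigr)^{\tau\text{-mch}(\sigma)} = \sum_{S} (x-1)^{|S|},
\end{equation*}
where $S$ ranges over subsets of the set of starting positions of $\tau$-matches in $\sigma$. Interchanging the order of summation, the left-hand side of the theorem becomes a generating function over pairs $(\sigma,S)$ weighted by $(x-1)^{|S|}\, p^{\coinv{\sigma}} q^{\inv{\sigma}}$, from which the identity is to be extracted.

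Next I would use the minimal overlapping hypothesis to decompose each marked pair $(\sigma,S)$ into blocks. Because any two consecutive $\tau$-matches belonging to $S$ share at most one position, the marked set $S$ partitions the positions $1,\ldots,n$ into maximal intervals of two types: singleton intervals containing no marked match, and clusters consisting of $m \ge 1$ consecutive overlapping marked matches, which occupy precisely $m(j-1)+1$ positions. On such a cluster the reduction of $\sigma$ is by construction an element of $\mathcal{MP}_{\tau,m(j-1)+1}$, and it contributes a factor of $(x-1)^m$ together with its internal $p,q$-statistic.

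The final step is the standard $(p,q)$-shuffle identity: if $\sigma \in S_n$ is assembled by placing patterns $\pi_i \in S_{s_i}$ on consecutive blocks with $\sum_i s_i = n$ and then choosing an assignment of the values $1,\ldots,n$ to the blocks, then summing $p^{\coinv{\sigma}}q^{\inv{\sigma}}$ over all such assignments yields $\binom{n}{s_1,\ldots,s_\ell}_{p,q}\prod_i p^{\coinv{\pi_i}}q^{\inv{\pi_i}}$. Dividing by $[n]_{p,q}!$ converts this to the multiplicative form $\prod_i p^{\coinv{\pi_i}}q^{\inv{\pi_i}}/[s_i]_{p,q}!$, so summing over all compositions $(s_1,\ldots,s_\ell)$ into singletons and cluster sizes and over all admissible internal patterns produces the geometric series $1/(1-B(t,x,p,q))$, where
\begin{equation*}
B(t,x,p,q) = t + \sum_{m \ge 1} \frac{t^{m(j-1)+1}}{[m(j-1)+1]_{p,q}!}(x-1)^m\, \text{mp}_{\tau,m(j-1)+1}(p,q)
\end{equation*}
is exactly the denominator appearing in the theorem.

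The main obstacle is the cluster decomposition. One must verify that the minimal overlapping property really does force the marked matches within any maximal cluster of $m$ matches to reduce to an element of $\mathcal{MP}_{\tau,m(j-1)+1}$, with the pattern on the cluster fully determined internally and independent of the neighbouring blocks; this local/global independence is what permits $\text{mp}_{\tau,m(j-1)+1}(p,q)$ to be factored out of the sum. Once this is in place, the $(p,q)$-shuffle identity and the passage to a geometric series are routine generating-function bookkeeping.
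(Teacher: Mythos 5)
This theorem is nowhere proved in the paper: it is imported verbatim from Duane and Remmel \cite{DR}, so there is no in-paper proof to compare your attempt against. Measured against the actual proof in \cite{DR}, your route is genuinely different. Duane and Remmel argue by the Mendes--Remmel homomorphism method \cite{MR,MR1}: one defines a ring homomorphism $\theta$ on symmetric functions by letting $\theta(e_n)$ be a suitable signed expression built from $\text{mp}_{\tau,n(j-1)+1}(p,q)$, applies $\theta$ to the identity $\sum_{n\geq 0}h_nt^n=\left(\sum_{n\geq 0}(-1)^ne_nt^n\right)^{-1}$, and then interprets $[n]_{p,q}!\,\theta(h_n)$ combinatorially via brick tabloids and a sign-reversing involution. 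Your argument is instead the Goulden--Jackson cluster method. Both are valid; yours is more self-contained and makes the role of the minimal overlapping hypothesis transparent, while the homomorphism machinery is what \cite{DR} and \cite{HR} reuse uniformly for colored permutations and column-strict arrays.

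Two substantive points. First, what you defer as ``the main obstacle'' is not an obstacle, and you should simply prove it: since $\tau$ is minimally overlapping, two marked matches either occupy disjoint intervals of positions or share exactly one position, so a maximal chain of $m$ marked matches with consecutive overlaps occupies exactly $m(j-1)+1$ positions; the reduction of $\sigma$ on that interval is a permutation of length $m(j-1)+1$ with $\tau$-matches at positions $1,j,2j-1,\ldots,(m-1)(j-1)+1$, and it can have no others, because for a minimally overlapping $\tau$ a permutation of length $m(j-1)+1$ has at most $m$ matches. Hence each cluster reduces to an element of $\mathcal{MP}_{\tau,m(j-1)+1}$; conversely, any tiling of the positions by singletons and cluster intervals, any assignment of maximum packings to the cluster intervals, and any distribution of the values $1,\ldots,n$ to the blocks reconstructs a unique marked pair $(\sigma,S)$, because reductions depend only on relative order within an interval. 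This bijection is exactly the local/global independence you need, and with it the factorization into the geometric series goes through. Second, be aware that what you proved is not literally the printed statement: your computation (correctly) puts $\frac{t^n}{[n]_{p,q}!}$ on the left-hand side, whereas the paper prints $\frac{t^n}{n!}$. The printed version is a typo and is false as stated: at $x=1$ its left side is $\sum_{n\geq 0}t^n[n]_{p,q}!/n!$ while its right side is $1/(1-t)$. The version with the $p,q$-factorial, which is what appears in \cite{DR} and what your argument establishes, is the correct one.
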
 
Here if $\sigma = \sigma_1 \ldots \sigma_n \in S_n$, then $\inv{\sigma} = 
|\{(i,j): 1 \leq i < j \leq n \ \& \ \sigma_i > \sigma_j\}|$ and 
$\coinv{\sigma} = 
|\{(i,j): 1 \leq i < j \leq n \ \& \ \sigma_i < \sigma_j\}|$ and 
$$\text{mp}_{\tau,n(j-1)+1}(p,q) = \sum_{\sigma \in \mathcal{MP}_{\tau,n(j-1)+1}} 
p^{\coinv{\sigma}} q^{\inv{\sigma}}.$$

Harmse and Remmel generalized these definitions and results to 
$\mathcal{F}_{n,k}$. However, as we shall see their 
definitions make perfectly good sense in the 
the setting of $\mathcal{C}_{=k[n-1]}$ or $\mathcal{ST}(n^k)$. 
That is, if 
$F$ is any filling of a $k \times n$-rectangle with distinct positive 
integers such that elements in each column increase, 
reading from bottom to top, then 
we let $\text{red}(F)$ denote the element of 
$\mathcal{F}_{n,k}$ which results from $F$ by replacing 
the $i$th smallest element of $F$ by $i$. 
For example, Figure \ref{fig:red} pictures 
a filling $F$  with its corresponding reduced filling, $\text{red}(F)$.

\begin{figure}[htbp]
	\begin{center}
		\scalebox{1.2}{\includegraphics{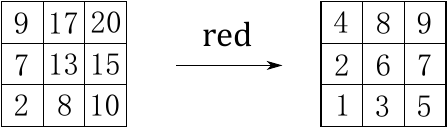}}
		\caption{An example of $\text{red}(F)$.}
		\label{fig:red}
	\end{center}
\end{figure}

If $F \in \mathcal{F}_{n,k}$ and $1 \leq c_1 < \cdots < c_j \leq n$, then 
we let $F[c_1,\ldots,c_j]$ be the filling of the $k \times j$ rectangle 
where the elements in column $a$ of $F[c_1,\ldots,c_j]$ equal the elements 
in column $c_a$ in $F$ for $a = 1, \ldots, j$.   We can 
then extend the usual pattern matching definitions from permutations 
to elements of  $\mathcal{F}_{n,k}$, $\mathcal{C}_{=k[n-1]}$, 
or $\mathcal{ST}(n^k)$ as follows. 
\begin{definition}\label{def1} Let $P$ be an element of 
$\mathcal{F}_{j,k}$ $(\mathcal{C}_{=k[j-1]}$, $\mathcal{ST}(j^k))$
and $F \in \mathcal{F}_{n,k}$ $(\mathcal{C}_{=k[n-1]}$, $\mathcal{ST}(n^k))$ 
where $j \leq n$. 
Then we say
\begin{enumerate}
\item  $P$ {\bf occurs} in $F$ if there are
$1 \leq i_1 < i_2 < \cdots < i_j \leq n$ such that
$\text{red}(F[i_1, \ldots, i_j]) = P$,

\item $F$ {\bf avoids} 
$P$ if there is no occurrence of $P$ in $F$, and   

\item there is a {\bf $P$-match  in $F$ starting at
position $i$} if $\text{red}(F[i,i+1, \ldots, i+j-1]) = P$.
\end{enumerate}
\end{definition}
We let $P\text{-mch}(F)$ denote the 
number of $P$-matches in $F$. For example, 
if we consider the fillings $P \in \mathcal{F}_{3,3}$ and 
$F,G \in \mathcal{F}_{6,3}$ shown in Figure \ref{fig:pmatch}, 
then it is easy to see that there are no $P$-matches in $F$ 
but there is an occurrence of $P$ in $F$  since 
$\text{red}(F[1,2,5]) =P$. Also, there are 2 $P$-matches in $G$ starting at 
positions 1 and 2, respectively, so $P\text{-mch}(G) =2$.

\begin{figure}[htbp]
	\begin{center}
		\scalebox{1.15}{\includegraphics{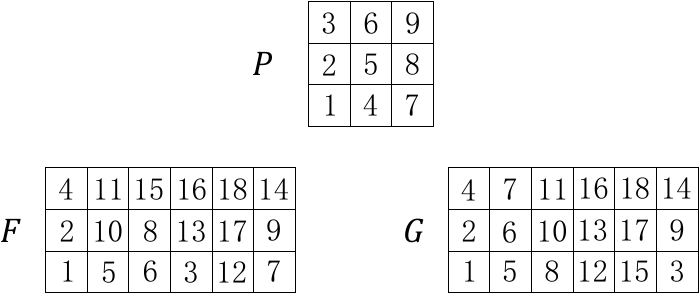}}
		\caption{Examples of $P$-matches and occurrences of $P$.}
		\label{fig:pmatch}
	\end{center}
\end{figure}

We  say that an element $P$ in  $\mathcal{F}_{j,k}$ 
($\mathcal{C}_{=k[n-1]}$, $\mathcal{ST}(n^k)$) 
has the {\em minimal overlapping property} or is {\em minimally overlapping}
if the smallest $i$ such that there exists an 
$F \in \mathcal{F}_{i,k}$ ($\mathcal{C}_{=k[i-1]}$, $\mathcal{ST}(i^k)$) 
with $P\text{-mch}(F) =2$ is $2j-1$.  Thus 
if $P$ has the minimal overlapping property, then 
two consecutive $P$-matches can only overlap in a single 
column, namely the last column of the first match and first column 
of the second match. We say that $P$ is {\em overlapping} if 
$P$ is not minimal overlapping.

If $P \in \mathcal{F}_{j,k}$ ($\mathcal{C}_{=k[j-1]}$, $\mathcal{ST}(j^k)$) 
has the minimal overlapping property, 
then the smallest $i$ such that there 
exists an $F \in \mathcal{F}_{i,k}$ 
($\mathcal{C}_{=k[i-1]}$, $\mathcal{ST}(i^k)$) such that $P\text{-mch}(F) = n$ is 
$n(j-1)+1$.  If $P \in \mathcal{F}_{j,k}$, 
we let $\mathcal{MP}_{P,n(j-1)+1}$ denote the set of all 
$F \in \mathcal{F}_{n(j-1)+1,k}$ such that $P\text{-mch}(F) =n$ 
and set $\text{mp}_{P,n(j-1)+1} = |\mathcal{MP}_{P,n(j-1)+1}|$.
If $P \in \mathcal{C}_{=k[j-1]}$, 
we let $\mathcal{CMP}_{P,n(j-1)+1}$ 
denote the set of all $F \in \mathcal{C}_{=k[n(j-1)+1]}$ 
such that $P\text{-mch}(F) =n$ and set $c\text{mp}_{P,n(j-1)+1} =
|\mathcal{CMP}_{P,n(j-1)+1}|$.
If $P \in \mathcal{ST}(j^k)$, 
we let $\mathcal{STMP}_{P,n(j-1)+1}$ denote the set of all 
$F \in \mathcal{ST}((n(j-1)+1)^k)$ such that $P\text{-mch}(F) =n$ 
and set $st\text{mp}_{P,n(j-1)+1} = |\mathcal{STMP}_{P,n(j-1)+1}|$.
In each case we shall call an 
$F \in \mathcal{MP}_{P,n(j-1)+1}$ 
($F \in \mathcal{CMP}_{P,n(j-1)+1}$, $F \in \mathcal{STMP}_{P,n(j-1)+1}$)
a {\em maximum packing} for $P$.

Given $P\in\mathcal F_{j,k}$, let
$$
A_P(x,t)=\sum_{n\geq0}\frac{t^n}{(kn)!}\sum_{F\in\mathcal{F}_{n,k}} 
x^{P\text{-mch}(F)}.
$$
Clearly, when $x=0$, 
$$
A_P(0,t)=\sum_{n\geq0}\frac{t^n}{(kn)!}\left|\{F\in\mathcal{F}_{n,k}:
P\text{-mch}(F) =0\}\right|.
$$
For $P,Q \in\mathcal F_{j,k}$, if $A_P(0,t)=A_Q(0,t)$, we say $P$ and $Q$ are c-Wilf equivalent. If $A_P(x,t)=A_Q(x,t)$, we say $P$ and $Q$ are strongly c-Wilf equivalent. In \cite{N}, in the case where 
$k=1$, Nakamura conjectured that if two permutations are c-Wilf equivalent then they are also strongly c-Wilf equivalent. Harmse and Remmel gave a similar conjecture in \cite{HR} when $k \geq 2$. That is, they made the following conjecture. 
\begin{conjecture}
$P,Q\in\mathcal F_{n,k}$ are c-Wilf equivalent if and only if $P$ and $Q$ are strongly c-Wilf equivalent.
\end{conjecture}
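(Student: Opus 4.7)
The "only if" direction is immediate: setting $x = 0$ in $A_P(x,t) = A_Q(x,t)$ gives $A_P(0,t) = A_Q(0,t)$. For the "if" direction, I would first dispose of the minimally overlapping case, which should follow from an $\mathcal{F}_{n,k}$-analogue of Theorem \ref{thm:DR}. Such an analogue, obtained by adapting the Duane--Remmel argument and tracking only cardinalities rather than the $(p,q)$-statistics, would give $A_P(x,t) = 1/(1 - g_P(x,t))$ with
\[
g_P(x,t) = t + \sum_{n \geq 1} \frac{t^{n(j-1)+1}}{(k(n(j-1)+1))!}\, (x-1)^n\, \text{mp}_{P,n(j-1)+1}.
\]
If $P,Q$ are both minimally overlapping and c-Wilf equivalent, then $g_P(0,t) = g_Q(0,t)$; extracting the coefficient of $t^{n(j-1)+1}$ forces $\text{mp}_{P,n(j-1)+1} = \text{mp}_{Q,n(j-1)+1}$ for every $n$, whence $g_P(x,t) = g_Q(x,t)$ for all $x$, giving strong c-Wilf equivalence.

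For general $P, Q \in \mathcal{F}_{n,k}$ I would adapt the Goulden--Jackson cluster method to the column-strict setting. Define a cluster for $P$ to be a filling $F \in \mathcal{F}_{m,k}$ together with a linked set of $s$ marked $P$-matches whose column supports cover $\{1,\ldots,m\}$, and let $C_P(y,t) = \sum_{m,s} \mathrm{cl}_{P,m,s}\, y^s\, t^m/(km)!$ denote the associated cluster generating function. A standard inclusion-exclusion then yields
\[
A_P(x,t) = \frac{1}{1 - t - C_P(x-1,t)}.
\]
Under this identity, c-Wilf equivalence is exactly $C_P(-1,t) = C_Q(-1,t)$ and strong c-Wilf equivalence is $C_P(y,t) = C_Q(y,t)$. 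The problem is thereby reduced to lifting a univariate identity to a bivariate one.

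The main obstacle is precisely this lifting. I would attempt a double induction on $m$ and $s$: extract the coefficient of $t^m$ in $C_P(-1,t) = C_Q(-1,t)$ and, using inductive control of clusters of length $<m$, try to disentangle the signed sum over $s$ to pin down each $\mathrm{cl}_{P,m,s}$ individually. For minimally overlapping $P$ this closes trivially because a cluster with $s$ marks has exactly $m = s(j-1)+1$ columns, so $[t^m]C_P(y,t)$ is a single monomial in $y$. For overlapping $P$, however, clusters of a fixed length $m$ come with many different values of $s$, and the alternating sum at $y=-1$ produces cancellations that the induction cannot in general recover --- this is the same obstruction that has kept the $k=1$ case of Nakamura's conjecture open. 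A possible route specific to $k \geq 2$ is that the column-strict structure provides extra refinements of cluster statistics (for instance, the reduced permutation obtained by reading each row of $F$ left to right) that may rigidify $C_P(y,t)$ beyond what its value at $y=-1$ records, and one might hope that exploiting these refined invariants would let the induction close for $k$ sufficiently large even when the permutation analogue remains intractable.
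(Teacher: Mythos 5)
The statement you were asked to prove is labeled a \emph{conjecture} in the paper, and the paper contains no proof of it: it is an open problem, the $k=1$ case being Nakamura's conjecture \cite{N} and the $k\geq 2$ case being due to Harmse and Remmel \cite{HR}. The paper only records that the conjecture is known to hold for minimal overlapping patterns (citing \cite{DK}, \cite{DR}, \cite{E1}, \cite{HR}), and that the first and last columns determine the c-Wilf class. So there is nothing in the paper to compare your argument against; the only fair assessment is whether your proposal actually closes the open problem, and by your own admission it does not.

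Measured against the known partial results, your sketch is accurate in outline but is not a proof. The easy direction (set $x=0$) is fine, though you have the labels ``if'' and ``only if'' swapped, and in the column-strict setting the first term of $g_P$ should be $t/k!$ rather than $t$, per Theorem \ref{thm:main}. Your treatment of the minimally overlapping case reproduces the cited results, but note it only covers the situation where \emph{both} $P$ and $Q$ are minimally overlapping; the mixed case (one minimally overlapping, one not) is not addressed. The genuine gap is the one you name yourself: for an overlapping pattern, clusters of a fixed length $m$ occur with several different numbers $s$ of marked matches, so $C_P(-1,t)$ records only the signed sums $\sum_s (-1)^s \mathrm{cl}_{P,m,s}$, and no induction you describe recovers the individual $\mathrm{cl}_{P,m,s}$ from those sums --- this cancellation is exactly why the conjecture remains open for every $k$. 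Your closing suggestion that row-reading invariants might rigidify the cluster polynomial when $k\geq 2$ is speculation unsupported by any argument. In short: the approach (cluster method plus lifting) is the natural and standard one, but the lifting step is missing, and filling it would constitute solving the open conjecture rather than reproving a result of the paper.
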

It has been shown that the conjecture holds for minimal overlapping patterns and the first and the last column of a pattern determines which c-Wilf equivalence class it belongs to (see \cite{DK}, \cite{DR}, \cite{E1}, and \cite{HR}). 
The key to proving such results is to prove an analogue 
to Theorem \ref{thm:DR}.  This was done by 
Harmse and Remmel \cite{HR} who proved the following theorem for 
minimal overlapping patterns in $\mathcal{F}_{j,k}$. 

\begin{theorem} \label{thm:main}
 Suppose that $k \geq 2$, $j \geq 2$, and $P \in  \mathcal{F}_{j,k}$ has the minimal overlapping property.
 Then 
 \begin{equation}\label{eq:main}
A_{P}(t,x) = \frac{1}{1 -(\frac{t}{k!}+ \sum_{n \geq 1} \frac{t^{n(j-1)+1}}{(k(n(j-1)+1))!} (x-1)^{n} \text{mp}_{P,n(j-1)+1})}. 
 \end{equation}
\end{theorem}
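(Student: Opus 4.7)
The plan is to adapt the Goulden--Jackson cluster method, which underlies Duane and Remmel's proof of Theorem~\ref{thm:DR}, to the setting of column-strict arrays. First, I would write $x^{P\text{-mch}(F)} = ((x-1)+1)^{P\text{-mch}(F)} = \sum_{S}(x-1)^{|S|}$, where $S$ ranges over subsets of the set of starting positions of $P$-matches in $F$, and interchange summations to obtain
\begin{equation*}
A_P(t,x) = \sum_{n \geq 0} \frac{t^n}{(kn)!} \sum_{F \in \mathcal{F}_{n,k}} \sum_{S} (x-1)^{|S|}.
\end{equation*}

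Next, I would decompose each pair $(F,S)$ into a linear sequence of blocks. The minimal overlapping property of $P$ implies that any two $P$-matches in $F$, in particular any two marked matches in $S$, either occupy disjoint column sets or overlap in exactly one column. Grouping marked matches into maximal chains of such single-column overlaps, each chain of $m$ marks occupies exactly $m(j-1)+1$ consecutive columns, and the same overlap constraint forbids any further unmarked $P$-match from appearing inside such a chain; hence the reduction of the chain is an element of $\mathcal{MP}_{P,m(j-1)+1}$. Columns not covered by any chain become trivial single-column blocks, and this presents $(F,S)$ uniquely as a sequence $B_1 B_2 \cdots B_r$ where each $B_i$ is either a single column or a cluster.

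Then, the EGF $A_P(t,x)$ factorizes over this block decomposition: because the $(kn)!$ in the denominator matches the multinomial counting the ways to distribute $kn$ labeled entries among blocks of column sizes $c_1, \ldots, c_r$ (the column-strict condition determines the filling within each block once the entries are assigned), the EGF of sequences of blocks equals $\tfrac{1}{1 - B(t,x)}$, where $B(t,x)$ sums the atomic contributions: $\tfrac{t}{k!}$ for a single-column block, and $\tfrac{t^{m(j-1)+1}}{(k(m(j-1)+1))!}(x-1)^m \text{mp}_{P,m(j-1)+1}$ for an $m$-mark cluster. This is exactly \eqref{eq:main}.

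The principal obstacle is making the cluster decomposition airtight: one must verify that the minimal overlapping property, originally a condition on $P$ itself, really prohibits two $P$-matches in \emph{any} column-strict array from sharing more than one column; that a maximal chain of $m$ marked matches standardizes to an element of $\mathcal{MP}_{P,m(j-1)+1}$ with no additional matches arising; and that the map from $(F,S)$ to block sequences with recorded entry sets is a bijection. Once these combinatorial identifications are established, the generating-function manipulation is formal and parallels the permutation case.
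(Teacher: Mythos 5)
Your outline is correct, but note first that the paper itself never proves Theorem \ref{thm:main}: the result is quoted from Harmse and Remmel \cite{HR}, just as Theorem \ref{thm:DR} is quoted from Duane and Remmel \cite{DR}, so there is no in-paper argument to compare yours against. The cited proofs do not use the Goulden--Jackson cluster method you propose; they use the Mendes--Remmel homomorphism method, in which one applies a ring homomorphism defined on the elementary symmetric functions to the identity $\sum_{n\geq 0} h_n t^n = 1/\sum_{n \geq 0}(-1)^n e_n t^n$, interprets the image of $h_n$ as a signed weighted sum over brick tabloids whose bricks are filled with column-strict arrays, and cancels terms by a sign-reversing involution whose fixed points correspond to configurations built from single columns and maximum packings. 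Your cluster decomposition is a genuinely different and arguably more elementary route to the same formula; what the homomorphism method buys is that refinements by further statistics, such as the inversion and coinversion numbers giving the $p,q$-analogue visible in Theorem \ref{thm:DR}, come along essentially for free. As to the substance of your sketch: the three verifications you flag are exactly the right ones, and all of them follow quickly from the minimal overlapping property. First, if two $P$-matches in any array shared at least two columns, the columns they jointly occupy would reduce to an element of $\mathcal{F}_{i,k}$ with $i < 2j-1$ having two $P$-matches, contradicting minimality. Second, inside a maximal chain of $m$ marked matches, any $P$-match must share at most one column with each chain match, which forces its starting column to be one of the chain positions $1, j, 2j-1, \ldots, (m-1)(j-1)+1$; since those positions already carry matches, the chain's reduction has exactly $m$ matches and so lies in $\mathcal{MP}_{P,m(j-1)+1}$. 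Third, distinct maximal chains occupy disjoint column intervals (otherwise they would merge into one chain), so the block structure is uniquely recoverable from $(F,S)$ and the correspondence with sequences of blocks carrying distributed entry sets is a bijection. One small correction: your parenthetical claim that the column-strict condition determines a block's filling once its entries are assigned is true only for single-column blocks; for a cluster block one must additionally choose a reduction in $\mathcal{MP}_{P,m(j-1)+1}$, which is precisely why the factor $\text{mp}_{P,m(j-1)+1}$ appears in $B(t,x)$ -- the formula you wrote is right, but the justification should say this explicitly.
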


One can prove analogous results for 
minimal overlapping patterns $P$ in $\mathcal{C}_{=k[j-1]}$ and for 
minimal overlapping patterns $P$ in $\mathcal{ST}(j^k)$. However,  
such results can no longer be expressed just in terms of maximum 
packings, but require a more sophisticated concept of generalized 
maximum packing. This will be the subject of a forthcoming paper 
\cite{PR}.

The main focus of this paper is the following questions. 
\begin{flushleft}
\hspace{27mm}How many minimal overlapping patterns are there in $\mathcal{F}_{n,k}$? \\
\hspace{27mm}How many minimal overlapping patterns are there in $\mathcal{ST}(n^k)$? \\ 
\hspace{27mm}How many minimal overlapping patterns are there in $\mathcal{C}_{=k[n-1]}$?
\end{flushleft}

We let $M_{n,k}$ ($STM_{n,k}$, $CM_{n,k}$) denote the number of minimal overlapping patterns in $\mathcal F_{n,k}$ ( $\mathcal{ST}(n^k)$, $\mathcal{C}_{=k[n-1]}$).
Then we will be interested in the following quantities:
\begin{eqnarray*}
a_{n,k} &=&  \frac{M_{n,k}}{|\mathcal{F}_{n,k}|}, \\
b_{n,k} &=& \frac{STM_{n,k}}{|\mathcal{ST}(n^k)|}, \ \mbox{and} \\
c_{n,k} &=&  \frac{CM_{n,k}}{|\mathcal{C}_{=k[n-1]}|}.  
\end{eqnarray*}

B\'ona studied the behavior of $a_{n,1}$ in \cite{B}. In \cite{B}, 
B\'ona proved that $a_{n,1} \geq 3 -e  \approx 0.2817$.  That is, 
he proved that at least 28.17\% of the permutations in $S_n$ are minimimal 
overlapping. Moreover, he proved that $\lim\limits_{n \rightarrow \infty} a_{n,1}$ 
exists.  The main goal of this paper is to prove similar 
results for $a_{n,k}$, $b_{n,k}$, and $c_{n,k}$.

The outline of this paper is as follows. 
In Section 2, we shall give general lower bound for $a_{n,k}$ for all $k$. 
In Section 3, we shall give formulas for $M_{n,k}$ and $a_{n,k}$. In Section 
4, we shall study the asymptotic behavior of $\{a_{n,k}\}$ for fixed $k$. 
In Section 5, we shall analyze $b_{n,k}$ and in Section 6, we 
shall analyze $c_{n,k}$. In section 7, we shall briefly discuss some 
open questions about the  
problem of finding the number of minimal overlapping permutations 
that start with a given initial segment.

\section{Lower Bound for $a_{n,k}$}

B\'ona proved that the probability that a randomly selected pattern in $\mathcal F_{n,1}$ is minimal overlapping is at least $3-e\approx 0.282$, that is, $L_1=3-e$ is a lower bound for $\{a_{n,1}\}_{n\geq 1}$, independent on $n$. Based on B\'ona's idea in \cite{B},  we are able to find a basic lower bound $L_k$ for $\{a_{n,k}\}_{n\geq 1}$ for any given $k \geq 1$.

We say that $P\in\mathcal F_{n,k}$ is overlapping at position 
$i$ if $\text{red}(P[1, \ldots, i]) = \text{red}(P[n-i+1, \ldots, n])$. That is, 
the reduction of the first $i$ columns of $P$ is equal to the reduction 
of the last $i$ columns of $P$. 
Suppose $P\in\mathcal F_{n,k}$ is overlapping, then there exists integer $i$, $2\leq i\leq n-1$ such that $P$ is overlapping at position $i$. Furthermore, one can easily check that $P$ is overlapping if and only if there exists integer $i$, $2\leq i<\frac n2+1$ such that $P$ is overlapping at position $i$. Let $E_i$ be the event that $P$ is overlapping at position $i$. If $2\leq i\leq\frac n2+1$, the probability that $E_i$ happens $Pr(E_i)=\frac{(k!)^i}{(ik)!}$. That is, we can partition  
the elements of $\mathcal{F}_{n,k}$ by the set of elements that lie 
in the first $i$ columns of $P$ and the set of elements that lie 
in the last $i$ columns of $P$. Once 
the filling of the first $i$ columns of $P$ is fixed, there are 
$\frac{(ik)!}{(k!)^i}$ ways to arrange the elements in the last $i$ columns 
and only one of them is favorable. If $\frac n2<i<\frac n2+1$, $Pr(E_i)\leq\frac{(k!)^i}{(ik)!}$. Then the probability that a randomly selected pattern in $\mathcal F_{n,k}$ is overlapping is bounded by 
$$
Pr\left(\bigcup_{2\leq i< \frac n2+1}E_i\right)\leq\sum_{2\leq i< \frac n2+1}Pr(E_i)\leq\sum_{2\leq i< \frac n2+1}\frac{(k!)^i}{(ik)!}<\sum_{i\geq 2}\frac{(k!)^i}{(ik)!}.
$$
Then for a fixed $k$, we get a lower bound $L_k$ for $\{a_{n,k}\}_{n\geq 1}$.
\begin{theorem}\label{thm:Lk}
\begin{equation}\label{eq:Lk}
L_{k}=1-\sum_{i\geq 2}\frac{(k!)^i}{(ik)!}.
\end{equation}
\end{theorem}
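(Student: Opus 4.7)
My plan is to follow the union-bound strategy that B\'ona used for $k=1$, upgraded to column-strict arrays in the obvious way; the ingredients are already sketched in the paragraphs preceding the theorem statement, and the job is to justify each step carefully.

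First I would make the event structure precise. For $2 \leq i \leq n-1$, let $E_i$ be the event that a uniformly random $P \in \mathcal{F}_{n,k}$ is overlapping at position $i$, that is, $\text{red}(P[1,\ldots,i]) = \text{red}(P[n-i+1,\ldots,n])$. Unwinding the definition of the minimal overlapping property shows that $P$ fails to be minimally overlapping if and only if $E_i$ occurs for some $i$ in $\{2,\ldots,n-1\}$. A short check shows that it suffices to restrict the index set to $\{i : 2 \leq i < n/2+1\}$: if the reductions of the first and last $i$ columns agree for some $i>n/2$, the induced self-similarity of the pattern produces a smaller witness in this range, and in any case restricting the range only strengthens the desired upper bound on the failure probability.

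Next I would bound $Pr(E_i)$. In the main regime $2 \leq i \leq n/2$ the first and last $i$ columns of $P$ are disjoint. I would sample $P$ by first choosing the column-strict filling of all columns except the last $i$ (this determines, in particular, the set of $ik$ values that must occupy the last $i$ columns), and then placing those $ik$ values in a uniformly random column-strict filling of the last $i$ columns. Conditioned on everything else, there are $(ik)!/(k!)^i$ such fillings, and exactly one realizes the reduction required to match the first $i$ columns; hence $Pr(E_i) = (k!)^i/(ik)!$. In the boundary case $n/2 < i < n/2+1$ (which is non-empty only when $n$ is odd, for $i=(n+1)/2$) the two blocks share a column so extra equality constraints appear, and a minor modification of the same argument yields $Pr(E_i) \leq (k!)^i/(ik)!$.

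The theorem then follows from the union bound:
$$Pr\Bigl(\bigcup_{2 \leq i < n/2+1} E_i\Bigr) \leq \sum_{2 \leq i < n/2+1} \frac{(k!)^i}{(ik)!} < \sum_{i \geq 2}\frac{(k!)^i}{(ik)!},$$
and taking the complement gives $a_{n,k} \geq 1 - \sum_{i\geq 2}(k!)^i/(ik)! = L_k$ uniformly in $n$. The one place where I would be most careful is the conditioning step underlying $Pr(E_i)=(k!)^i/(ik)!$: I would verify that the sampling scheme described leaves the last $i$ columns uniformly distributed over column-strict fillings of a fixed value set given the rest of $P$, so that counting favorable fillings really translates into the stated probability. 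Everything else is elementary bookkeeping.
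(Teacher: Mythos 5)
Your proposal is correct and follows essentially the same route as the paper's proof: the same overlap events $E_i$, the same reduction of the index range to $2 \leq i < \frac{n}{2}+1$ via the smaller-witness observation, the same conditional counting argument giving $Pr(E_i) = \frac{(k!)^i}{(ik)!}$ for disjoint blocks (with only an inequality in the shared-column case for odd $n$), and the same union bound. The extra care you flag about uniformity of the conditional distribution of the last $i$ columns is precisely the justification implicit in the paper's statement that, once the first $i$ columns are fixed, there are $\frac{(ik)!}{(k!)^i}$ equally likely arrangements of the last $i$ columns, exactly one of which is favorable.
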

Then by Theorem \ref{thm:Lk}, we can compute $L_1$ and $L_2$ directly.
\begin{eqnarray*}
L_1&=&1-\sum_{i\geq 2}\frac1{i!}=1-(-2+e)=3-e\approx0.282.\\
L_2&=&1-\sum_{i\geq 2}\frac{(2!)^i}{(2i)!}=3-\sum_{i\geq 0}\frac{2^i}{(2i)!}=3-\frac12\left(\sum_{i=0}\frac{(\sqrt{2})^i}{i!}+\sum_{i=0}\frac{(-\sqrt{2})^i}{i!}\right)\\
&=&3-\frac12\left(e^{\sqrt{2}}+e^{-\sqrt{2}}\right)=3-\cosh(\sqrt{2})\approx0.822.
\end{eqnarray*}
The results above indicate there are at least 28.2$\%$ of patterns in $\mathcal F_{n,1}$ that are minimal overlapping, and at least 82.2$\%$ of patterns in $\mathcal F_{n,2}$ that are minimal overlapping,
no matter what $n$ is.\par

Moreover, by observing Equation (\ref{eq:Lk}), one can easily find that the sequence $\{L_k\}$ is monotone increasing. It agrees with our intuition because a pattern is minimal overlapping as long as there exists one row whose reduction is minimal overlapping and, hence, more rows means higher chance to be minimal overlapping. Also, since $\{L_k\}$ is bounded by $1$, the sequence converges.
\begin{theorem}
\begin{equation}\label{eq:limLk}
\lim_{k\rightarrow \infty}L_{k}=1.
\end{equation}
\end{theorem}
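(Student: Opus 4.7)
The plan is to prove that $\sum_{i\geq 2}\frac{(k!)^i}{(ik)!}\to 0$ as $k\to\infty$, from which $\lim_{k\to\infty} L_k = 1$ follows directly from equation (\ref{eq:Lk}).

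The key observation is that the terms $a_i := \frac{(k!)^i}{(ik)!}$ satisfy a simple recursive relation. First I would write
$$
\frac{a_i}{a_{i-1}} = \frac{k!\,((i-1)k)!}{(ik)!} = \frac{1}{\binom{ik}{k}},
$$
so that $a_i$ is obtained from $a_{i-1}$ by dividing by a central binomial-type coefficient. Since $\binom{ik}{k}$ is increasing in $i$, for every $i\geq 2$ we have $\binom{ik}{k}\geq\binom{2k}{k}$. Combined with $a_1 = 1$, an easy induction gives
$$
a_i \leq \frac{1}{\binom{2k}{k}^{\,i-1}}\qquad\text{for all } i\geq 1.
$$

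Summing a geometric series then bounds the tail:
$$
\sum_{i\geq 2}\frac{(k!)^i}{(ik)!} \;=\; \sum_{i\geq 2} a_i \;\leq\; \sum_{j\geq 1}\frac{1}{\binom{2k}{k}^{\,j}} \;=\; \frac{1}{\binom{2k}{k}-1}.
$$
Since $\binom{2k}{k}\to\infty$ as $k\to\infty$ (for instance by the standard asymptotic $\binom{2k}{k}\sim 4^k/\sqrt{\pi k}$, but elementary monotonicity suffices), the right-hand side tends to $0$. Combining with (\ref{eq:Lk}) yields $L_k \to 1$.

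There is essentially no obstacle here: the argument is a geometric-series comparison driven by the monotonicity $\binom{ik}{k}\geq\binom{2k}{k}$ for $i\geq 2$. The only thing to be careful about is the indexing in the telescoping step, and noting that even the single term $a_2 = 1/\binom{2k}{k}$ already forces the full sum to vanish, since the remaining terms are dominated by a convergent geometric series whose ratio $1/\binom{2k}{k}$ also tends to $0$.
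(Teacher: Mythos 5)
Your proof is correct, and it takes a genuinely different route from the paper's. The paper argues term by term: for each fixed $i\geq 2$ it notes $\frac{(k!)^i}{(ik)!}\rightarrow 0$ as $k\rightarrow\infty$, dominates the terms by the summable sequence $\frac{1}{i^2}$, and invokes the dominated convergence theorem to interchange the limit with the sum in (\ref{eq:Lk}). You instead bound the entire tail at once by a geometric series: the ratio identity $a_i/a_{i-1}=1/\binom{ik}{k}$ together with the monotonicity $\binom{ik}{k}\geq\binom{2k}{k}$ for $i\geq 2$ gives the explicit estimate $\sum_{i\geq 2}\frac{(k!)^i}{(ik)!}\leq\frac{1}{\binom{2k}{k}-1}$. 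Your argument is entirely elementary (no measure-theoretic convergence theorem is needed) and, more importantly, it is quantitative: it shows $1-L_k\leq\frac{1}{\binom{2k}{k}-1}$, which is $O\bigl(4^{-k}\sqrt{k}\bigr)$, thereby explaining the paper's empirical remark that $\{L_k\}$ converges to $1$ rapidly ($L_2\approx 0.822$, $L_3\approx 0.950$, $L_4\approx 0.986$, \ldots), something the soft DCT argument cannot do. The paper's proof is shorter to state, but your route also avoids a small blemish in its domination step: the claimed bound $\frac{(k!)^i}{(ik)!}\leq\frac{1}{i^2}$ fails at $k=1$, $i=2$ (where $\frac{1}{2}>\frac{1}{4}$), so it is only valid for $k\geq 2$ --- harmless for the limit, since only large $k$ matters, but your geometric bound needs no such caveat.
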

\begin{proof}
Since
$$
L_k=1-\sum_{i\geq 2}\frac{(k!)^i}{(ik)!},
$$
to show (\ref{eq:limLk}) it suffices to show
$$
\lim_{k\rightarrow\infty}\sum_{i\geq 2}\frac{(k!)^i}{(ik)!}=0.
$$
For $i\geq 2$,
$$
\lim_{k\rightarrow\infty}\frac{(k!)^i}{(ik)!}=0\text{, }\phantom{123}\text{ }\left|\frac{(k!)^i}{(ik)!}\right|\leq \frac1{i^2}\phantom{123}\text{ }\text{ and }\phantom{123}\text{} \sum_{i\geq 2}\frac1{i^2}<\infty .
$$
Then by the dominated convergence theorem,
$$
\lim_{k\rightarrow\infty}\sum_{i\geq 2}\frac{(k!)^i}{(ik)!}=
\sum_{i\geq 2}\lim_{k\rightarrow\infty}\frac{(k!)^i}{(ik)!}=0.
$$
\end{proof}
Thus we have the following corollary. 
\begin{corollary}
For a given $n$,
\begin{equation}
\lim_{k\rightarrow \infty}a_{n,k}=1.
\end{equation}
\end{corollary}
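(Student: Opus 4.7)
The plan is to invoke the squeeze theorem, using the uniform lower bound $L_k$ established for $\{a_{n,k}\}_{n\geq 1}$ in Theorem \ref{thm:Lk} together with the trivial upper bound $a_{n,k}\leq 1$ that follows from the definition $a_{n,k}=M_{n,k}/|\mathcal{F}_{n,k}|$. Since the bound $L_k$ was derived by a union bound over overlap positions that does \emph{not} depend on $n$, for every fixed $n$ (and in particular every $n\geq 2$ for which the statement is nontrivial) we have the chain of inequalities
\begin{equation*}
L_k \;\leq\; a_{n,k} \;\leq\; 1.
\end{equation*}

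First I would briefly verify that the lower bound $L_k$ from Theorem \ref{thm:Lk} really does apply to $a_{n,k}$ for this fixed $n$; this is immediate because the argument in Section 2 bounds the probability that a uniformly random $P\in\mathcal{F}_{n,k}$ is overlapping by $\sum_{i\geq 2}(k!)^i/(ik)!$ regardless of the value of $n$, so the complementary probability $a_{n,k}$ is at least $1-\sum_{i\geq 2}(k!)^i/(ik)!=L_k$. Second, I would apply the preceding theorem, $\lim_{k\to\infty} L_k = 1$, which was just proved via the dominated convergence theorem applied to the series $\sum_{i\geq 2}(k!)^i/(ik)!$. Third, letting $k\to\infty$ in the chain of inequalities above and using the sandwich theorem yields $\lim_{k\to\infty} a_{n,k}=1$, as desired.

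There is essentially no obstacle: the corollary is a direct consequence of the uniformity (in $n$) of the lower bound together with the limit of $L_k$. The only point worth stressing in the write-up is that $L_k$ is a bound that holds for \emph{every} $n$ simultaneously, which is precisely what is needed to fix $n$ and let $k$ tend to infinity. Since the bound $L_k$ is also monotone in $k$, one could alternatively phrase the argument without explicitly invoking the squeeze theorem, but the squeeze formulation is the cleanest.
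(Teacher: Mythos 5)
Your proof is correct and matches the paper's (implicit) argument exactly: the corollary follows from the $n$-independent bound $L_k \leq a_{n,k} \leq 1$ of Theorem \ref{thm:Lk} together with the preceding theorem that $\lim_{k\to\infty} L_k = 1$, via the squeeze theorem. The paper leaves this deduction unstated, and your write-up supplies precisely the intended details, including the key observation that $L_k$ is uniform in $n$.
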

Now we see no matter how large $n$ is, almost every pattern in $\mathcal F_{n,k}$ is minimal overlapping as long as $k$ is large enough. Another fact worth mentioning  is that $\{L_k\}$ converges to $1$ rapidly. By (\ref{eq:Lk}), $L_1 \approx 0.282$, $L_2 \approx 0.822$, $L_3 \approx 0.950$, $L_4 \approx 0.986$, $L_5 \approx 0.996$, $\cdots$.

\section{Formulas for $M_{n,k}$ and $a_{n,k}$}
It follows from our observations in the previous section  
that a pattern $P\in\mathcal F_{n,k}$ is overlapping if and only if there exists a unique integer $i$ such that $P$ is overlapping at position $i$ and the reduction of the first $i$ columns of $P$ is minimally overlapping with $2\leq i< \frac n2+1$.
The basic idea of obtaining the formula for $M_{n,k}$, the number of minimal overlapping patterns in $\mathcal F_{n,k}$, is to subtract the number of overlapping patterns from the total number.

Assuming $n$ is fixed, we separate the discussion of $M_{n,k}$ into two cases.

Case 1. $k$ is an even number.
\begin{eqnarray*}
M_{n,k}&=&\binom {nk} {k,k,\ldots,k}-M_{2,k}\binom {nk} {2k,2k}\binom {(n-4)k} {k,k,\cdots,k}-
M_{3,k}\binom {nk} {3k,3k}\binom {(n-6)k} {k,k,\cdots,k}\\
&&-M_{4,k}\binom {nk} {4k,4k}\binom {(n-8)k} {k,k,\ldots,k}-\cdots -M_{\frac n2,k}\binom {nk} {\frac 12nk,\frac 12nk}\\
&=&\frac{(nk)!}{(k!)^n}-\sum_{i=2}^{\frac n2}M_{i,k}\binom {nk} {ik,ik}\frac{((n-2i)k)!}{(k!)^{n-2i}}.
\end{eqnarray*}

The only significant difference between the even case and the odd case is the term corresponding to patterns overlapping at position $\frac{n+1}2$.

Case 2. $n$ is an odd number.
\begin{eqnarray*}
M_{n,k}&=&\binom {nk} {k,k,\ldots,k}-M_{2,k}\binom {nk} {2k,2k}\binom {(n-4)k} {k,k,\cdots,k}-
M_{3,k}\binom {nk} {3k,3k}\binom {(n-6)k} {k,k,\cdots,k}\\
&&-\cdots -M_{\frac {n-1}2,k}\binom {nk} {\frac 12(n-1)k,\frac 12(n-1)k}-\sum_{P\in\mathcal M_{\frac{n+1}2,k}}\text{mp}_{P,n}\\
&=&\frac{(nk)!}{(k!)^n}-\sum_{i=2}^{\frac {n-1}2}M_{i,k}\binom {nk} {ik,ik}\frac{((n-2i)k)!}{(k!)^{n-2i}}-\sum_{P\in\mathcal M_{\frac{n+1}2,k}}\text{mp}_{P,n}.
\end{eqnarray*}
Dividing $M_{n,k}$ by $|\mathcal F_{n,k}|$, we get formula of $a_{n,k}$ as follows
\begin{theorem}\label{thm:evenodd}
\begin{eqnarray}
\text{If $n$ is even, }&&a_{n,k}=1-\sum_{i=2}^{\frac n2}a_{i,k}\frac{(k!)^i}{(ik)!}.\label{eq:even}\\
\text{If $n$ is odd,  }&&a_{n,k}=1-\sum_{i=2}^{\frac {n-1}2}a_{i,k}\frac{(k!)^i}{(ik)!}-b_{n,k},\label{eq:odd}
\end{eqnarray}
where $b_{n,k}=\frac{\sum_{P\in\mathcal M_{\frac{n+1}2,k}}\text{mp}_{P,n}}{|\mathcal F_{n,k}|}$.
\end{theorem}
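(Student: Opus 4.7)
The plan is to count overlapping patterns in $\mathcal{F}_{n,k}$, partitioned according to their ``smallest overlap position,'' and then subtract from the total $|\mathcal{F}_{n,k}|=(nk)!/(k!)^n$. The starting point is the observation already asserted in the paragraph preceding the theorem: every overlapping $F\in\mathcal{F}_{n,k}$ admits a unique integer $i$ with $2\leq i<n/2+1$ such that $F$ overlaps at position $i$ and $\text{red}(F[1,\ldots,i])$ is minimally overlapping. I would first verify this cleanly by taking $i$ to be the smallest position of overlap of $F$, then showing (a) the reduction of the first $i$ columns must be minimal overlapping, for otherwise a smaller overlap of that reduction lifts to a smaller overlap of $F$; and (b) this $i$ is unique, because if $i_1<i_2$ both worked then restricting the overlap at $i_2$ to the last $i_1$ columns of $F[1,\ldots,i_2]$ would exhibit a non-trivial overlap in $\text{red}(F[1,\ldots,i_2])$, contradicting its minimal overlapping property.

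Next I would count, for each admissible $i$, the overlapping patterns assigned to $i$, splitting into two geometric cases. If $2i\leq n$, the first $i$ and last $i$ columns are column-disjoint and the count decomposes into four independent choices: $\binom{nk}{ik,ik}$ for which $ik$ elements sit in the first $i$ columns and which $ik$ in the last $i$, times $M_{i,k}$ for a minimally overlapping column-strict filling of the first block (the reduction determines the filling once its elements are chosen), times $1$ for the last block (forced to share the reduction), times $((n-2i)k)!/(k!)^{n-2i}$ for a free column-strict filling of the middle $n-2i$ columns. When $n$ is odd and $i=(n+1)/2$, the first and last $i$ columns share the middle column and this product formula is not available; instead, a pattern $F$ of this type is exactly a filling in $\mathcal{F}_{n,k}$ realising two $P$-matches of a minimally overlapping $P\in\mathcal{M}_{(n+1)/2,k}$ that share one column, i.e., a maximum packing counted by $\text{mp}_{P,n}$ since $n=2((n+1)/2-1)+1$.

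Combining these contributions gives the two case formulas for $M_{n,k}$ displayed right before the theorem. Dividing by $|\mathcal{F}_{n,k}|$ and using $a_{i,k}=M_{i,k}(k!)^i/(ik)!$, the general term $M_{i,k}\binom{nk}{ik,ik}((n-2i)k)!/(k!)^{n-2i}\,/\,|\mathcal{F}_{n,k}|$ simplifies after routine factorial cancellation to $a_{i,k}(k!)^i/(ik)!$, and the odd-case extra term becomes $b_{n,k}$ by its definition. I expect the only substantive obstacle to be the uniqueness half of the classification in the first paragraph; once that is nailed down, the remainder is a straightforward inclusion--exclusion-style accounting that one can carry out on autopilot.
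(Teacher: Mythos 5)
Your proposal is correct and takes essentially the same approach as the paper: classify each overlapping filling by the unique position $i$ ($2\leq i<\frac n2+1$) at which it overlaps with minimally overlapping prefix reduction, count the column-disjoint case by the product $M_{i,k}\binom{nk}{ik,ik}\frac{((n-2i)k)!}{(k!)^{n-2i}}$ and the odd middle case ($i=\frac{n+1}2$) by maximum packings $\sum_{P}\text{mp}_{P,n}$, then subtract from $\frac{(nk)!}{(k!)^n}$ and divide by $|\mathcal{F}_{n,k}|$. The only difference is that you spell out the classification lemma (existence via the smallest overlap position, and uniqueness), which the paper merely asserts as following from its Section 2 observations.
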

It may seem that (\ref{eq:even}) and (\ref{eq:odd}) gives us a way to 
recursively compute $a_{n,k}$, but unfortunately this is not 
the case. That is, while the even terms depend only on the previous values, 
the odd terms rely on both previous terms and 
$\sum_{P \in \mathcal{M}_{(n+1)/2,k}} \text{mp}_{P,n}$ 
which is difficult to compute. Indeed, 
we have no general way to compute $\text{mp}_{P,n}$ for a fixed 
minimal overlapping $P$.

\section{The limit of $a_{n,k}$}
In this section, we study the asymptotic behavior of $\{a_{n,k}\}_{n\geq 1}$, for any given integer $k\geq 1$. 
Throughout this section, we shall assume that 
$k$ is a  fixed number greater than or equal to 1. 
We let $\{d_n\}$ be the subsequence of $\{a_{n,k}\}_{n\geq 1}$ consisting of odd terms, ie., $d_j=a_{2j-1,k}$, $j=1,2,3,\cdots$. We let $\{e_n\}$ be the subsequence consisting of even terms.\par
From equation (\ref{eq:even}) in Theorem \ref{thm:evenodd}, we see $\{e_n\}$ is a monotone decreasing sequence and  $a_{n,k}$ has lower bound $L_k$, hence $\{e_n\}$ is convergent. To show $a_{n,k}$ has a limit, we need to show $\{d_n\}$ converges to the limit of $\{e_n\}$. This will result in the following theorem. 
\begin{theorem}\label{thm:limit}
For a fixed integer $k\geq 1$, 
$\lim\limits_{n\rightarrow \infty}a_{n,k}$ exists.
\end{theorem}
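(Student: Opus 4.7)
The plan is to exploit the observation made in the paragraph preceding the theorem: the even-indexed subsequence $\{e_m\} = \{a_{2m,k}\}_{m\geq 1}$ is monotone decreasing (from (\ref{eq:even}), incrementing $m$ simply appends a nonnegative term to the sum being subtracted) and bounded below by $L_k$, so $e_m \to L$ for some $L \in [L_k, 1]$. It remains to show that the odd-indexed subsequence $\{d_m\} = \{a_{2m+1,k}\}_{m\geq 1}$ converges to the same limit $L$.

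Comparing (\ref{eq:even}) at $n = 2m$ with (\ref{eq:odd}) at $n = 2m+1$, the two truncated sums $\sum_{i=2}^{m} a_{i,k}(k!)^i/(ik)!$ coincide, so
\[
a_{2m+1,k} \;=\; a_{2m,k} - b_{2m+1,k},
\]
where $b_{2m+1,k}$ is the quantity defined in Theorem \ref{thm:evenodd}. Thus the theorem reduces to proving that $b_{2m+1,k} \to 0$ as $m \to \infty$.

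For this I would reuse the ``overlap at position $i$'' interpretation that drives Section 2. A filling $F \in \mathcal{F}_{2m+1,k}$ contributing to the numerator of $b_{2m+1,k}$ must carry two $P$-matches for some minimally overlapping $P \in \mathcal{F}_{m+1,k}$; by the minimal-overlapping property combined with $2(m+1)-1 = 2m+1$, these matches must occupy columns $1,\dots,m+1$ and $m+1,\dots,2m+1$, sharing exactly the middle column. In particular the reduction of the first $m+1$ columns of $F$ equals the reduction of its last $m+1$ columns, so $F$ is overlapping at position $m+1$. Since the pattern $P$ is uniquely determined by either of these reductions, the sum $\sum_P \text{mp}_{P,2m+1}$ counts each such $F$ at most once. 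Dropping the minimally-overlapping restriction on $P$ yields
\[
b_{2m+1,k} \;\leq\; Pr(E_{m+1}) \;\leq\; \frac{(k!)^{m+1}}{((m+1)k)!},
\]
and for fixed $k$ this tends to $0$ factorially fast. Combined with $a_{2m,k} \to L$, this gives $a_{2m+1,k} \to L$, and hence $a_{n,k} \to L$.

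The main obstacle is really just the combinatorial identification in the previous paragraph, namely the observation that every filling counted by $\sum_P \text{mp}_{P,2m+1}$ is automatically overlapping at position $m+1$. Once this is in hand, the remaining estimates follow from the overlap-probability bound already proved in Section 2 together with the monotone convergence of the even subsequence, so no new machinery is needed.
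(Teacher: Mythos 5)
Your proposal is correct and takes essentially the same approach as the paper: both note that the even subsequence converges by monotonicity and boundedness, reduce the theorem to showing $e_m - d_{m+1} = b_{2m+1,k} \to 0$, and bound $b_{2m+1,k}$ by counting fillings that overlap at the middle column. The only cosmetic difference is that the paper writes this bound as $\sum_{P\in\mathcal M_{m+1,k}}\text{mp}_{P,2m+1}\leq M_{m+1,k}\binom{(2m+1)k}{mk}$ and simplifies it to $a_{m+1,k}/|\mathcal F_{m,k}|$, whereas you bound it by the overlap probability $Pr(E_{m+1})$ from Section 2; both estimates vanish factorially fast, so the arguments coincide in substance.
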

\begin{proof}
We only need to show the limit of $d_n$ equals the limit of $e_n$. Since $d_{n+1}=a_{2n+1,k}$ and $e_n=a_{2n,k}$, by (\ref{eq:even}) and (\ref{eq:odd}), we have
$e_n-d_{n+1}=b_{2n+1,k}$.
\begin{equation}\label{eq:RHS}
b_{2n+1,k}=\frac{\sum_{P\in\mathcal M_{n+1,k}}\text{mp}_{P,2n+1}}{|\mathcal F_{2n+1,k}|}\leq \frac{ M_{n+1,k}\binom{(2n+1)k}{nk}}{|\mathcal F_{2n+1,k}|}
\end{equation}
The inequality holds because that the fact that the 
reduction of the first $n+1$ columns of a pattern in $\mathcal F_{2n+1,k}$ is minimal overlapping does not guarantee the pattern is overlapping at position $n+1$. In other words,  it is a necessary condition that the reduction of the first $n + 1$ columns is minimal overlapping but that it is not sufficient. Take patterns in $\mathcal F_{5,1}$ as an example. Assume the reduction of the first three numbers is $1~3~2$ which is minimal overlapping. If the initial three numbers are $3~4~2$, we can put $5~1$ in the end to make $3~4~2~5~1$ overlapping at the middle position. However, a pattern with prefix $3~4~2$ can not be overlapping at position $3$. We use RHS for the right-handed side of (\ref{eq:RHS}).
\begin{eqnarray*}
\text{RHS}&=&\frac{ \binom{(2n+1)k}{nk}M_{n+1,k}}{|\mathcal F_{2n+1,k}|}=
\frac{\binom{(2n+1)k}{nk} |\mathcal F_{n+1,k}|a_{n+1,k}}{|\mathcal F_{2n+1,k}|}\\
&=&\frac{(2nk+k)!(nk+k)!(k!)^{2n+1}}
{(nk)!(nk+k)!(k!)^{n+1}(2nk+k)!}a_{n+1,k}\\
&=&\frac{(k!)^n}{(nk)!}a_{n+1,k}\\
&=&\frac{a_{n+1,k}}{|\mathcal F_{n,k}|}.
\end{eqnarray*}
As $n$ goes to infinity, $|\mathcal F_{n,k}|$ goes to infinity and because $a_{n+1,k}$ is bounded by $1$,
$$
\lim_{n\rightarrow \infty}e_n-d_{n+1}=\lim_{n\rightarrow\infty}
b_{2n+1,k}=0.
$$
Therefore, $\lim\limits_{n\arrowvert\infty}{d_n}=\lim\limits_{n\arrowvert\infty}{e_n}$ and then hence $\lim\limits_{n\rightarrow \infty}a_{n,k}$ exists.
\end{proof}
Note that $|\mathcal F_{n,k}|$ increases rapidly as $n$ increases so that 
$a_{n,k}$ converges rapidly. The larger $k$ becomes, the faster $a_{n,k}$ converges.\par 
Let $l_k$ be the limit of $a_{n,k}$ as $n$ goes to infinity. Using Monte Carlo methods, we computed approximate values  for $l_k$ for $k=1,2,3,4$: $l_1\approx0.364$, $l_2\approx0.823$, $l_3\approx0.949$ and $l_4\approx 0.986$. Comparing $L_k$ and $l_k$, we find they are very close. This is not a coincidence because (\ref{eq:Lk}) is almost the same as (\ref{eq:even}) and (\ref{eq:odd}) and we know $a_{n,k}$ is close to $1$ when $n$ is large.

\section{Standard tableaux of rectangular shapes}\label{Sec:SYT}

The set of all the standard Young tableaux of rectangular shape $n^k$ is denoted by $\mathcal{ST}(n^k)$ and clearly it is a subset of $\mathcal F_{n,k}$. 
It is well-known that the cardinality of $\mathcal{ST}(n^2)$ is the $n$th Catalan number $\text{Cat}(n)=\frac{1}{n+1}\binom{2n}{n}$. Then using the similar argument of Theorem \ref{thm:Lk}, we have a lower bound for $b_{n,2}$, where $b_{n,2}$ is proportion of  minimal overlapping patterns in $\mathcal{ST}(n^2)$
$$
L^S_2 = 1 -\sum_{i\geq 2}\frac 1{\text{Cat}(i)}.
$$
To find the exact value of the lower bound, we must compute the sum $\sum_{i\geq 2}\frac1{\text{Cat}(i)}$. Define a power series $f(x)$ as follows
$$
f(x)=\sum_{i\geq 1}\frac1{\text{Cat}(i)}x^i.
$$
Then
$$
xf'(x)=x\sum_{i\geq 1}\frac i{\text{Cat}(i)}x^{i-1}=\sum_{i\geq 1}\frac i{\text{Cat}(i)}x^{i}
$$
By the recursion of Catalan numbers $\text{Cat}(i+1)=\frac{4i+2}{i+2}\text{Cat}(i)$,
\begin{eqnarray*}
xf'(x)&=&\sum_{i\geq1}\frac{i+2-2}{\text{Cat}(i)}x^i=\sum_{i\geq1}\frac{i+2}{\text{Cat}(i)}x^i-2\sum_{i\geq 1}\frac1{\text{Cat}(i)}x^i=\sum_{i\geq1}\frac{4i+2}{\text{Cat}(i+1)}x^i-2f(x)\\
&=&\sum_{i\geq1}\frac{4(i+1)-2}{\text{Cat}(i+1)}x^i-2f(x)=4\sum_{i\geq1}\frac{i+1}{\text{Cat}(i+1)}x^i-2\sum_{i\geq 1}\frac1{\text{Cat}(i+1)}x^i-2f(x)\\
&=&\frac4x(xf'(x)-x)-\frac2x(f(x)-x)-2f(x)=4f'(x)-\frac{2x+2}{x}f(x)-2.
\end{eqnarray*}
So we obtain a first-order ordinary differential equation
\begin{equation}\label{eq:ode}
f'(x)+\left(\frac{2x+2}{x^2-4x}\right)f(x)=\frac2{4-x}.
\end{equation}

Solving this differential equation (\ref{eq:ode}) with the initial value condition  $f(0)=0$ for $f(x)$ yields  
$$
\frac{(x-10) (x-4) x+24 \sqrt{4x-x^2} \arcsin\left(\frac{\sqrt{x}}2\right)}{(4-x)^3}
$$
Setting $x=1$ in this formula for $f(x)$ gives our next theorem. 
\begin{theorem}
\begin{equation}
\sum_{i\geq 1}\frac1{\text{Cat}(i)}=1+\frac{4\sqrt{3}\pi}{27}.
\end{equation}
\end{theorem}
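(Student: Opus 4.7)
The plan is to substitute $x=1$ into the closed-form solution of the ODE~(\ref{eq:ode}) that is displayed above the theorem and read off the value. Before doing so, I would first verify that $x=1$ is a legitimate point at which to evaluate: since $\text{Cat}(i)\sim 4^i/(i^{3/2}\sqrt{\pi})$, the power series $f(x)=\sum_{i\geq 1}x^i/\text{Cat}(i)$ has radius of convergence exactly $4$, so $x=1$ lies strictly inside the disk of convergence and $f(1)=\sum_{i\geq 1}1/\text{Cat}(i)$ without recourse to any Abel-type limit.

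To have confidence in the closed form itself, I would then outline how it is obtained from (\ref{eq:ode}) in the standard way. Partial fractions give $\frac{2x+2}{x(x-4)}=-\frac{1}{2x}+\frac{5}{2(x-4)}$, so an integrating factor on $(0,4)$ is $\mu(x)=(4-x)^{5/2}/\sqrt{x}$, and the equation reduces to $(\mu f)'=2(4-x)^{3/2}/\sqrt{x}$. The trigonometric substitution $x=4\sin^2\theta$ converts the right-hand side to $64\cos^4\theta\,d\theta$, which integrates to $24\theta+16\sin 2\theta+2\sin 4\theta+C$. Back-substituting $\theta=\arcsin(\sqrt{x}/2)$, along with $\sin 2\theta=\tfrac12\sqrt{4x-x^2}$ and $\cos 2\theta=(2-x)/2$, yields
\[
\mu(x)f(x)=24\arcsin(\sqrt{x}/2)+(10-x)\sqrt{4x-x^2}+C.
\]
The initial condition $f(0)=0$, combined with $\mu(x)f(x)\sim 32\sqrt{x}\to 0$ as $x\to 0^+$, forces $C=0$, reproducing the formula given in the paper (after clearing $\sqrt{x}/(4-x)^{1/2}$ from numerator and denominator).

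Finally I plug in $x=1$: using $\arcsin(1/2)=\pi/6$, $\sqrt{4-1}=\sqrt{3}$, $(1-10)(1-4)(1)=27$, and $(4-1)^3=27$, the closed form gives
\[
f(1)=\frac{27+24\sqrt{3}\cdot(\pi/6)}{27}=\frac{27+4\sqrt{3}\pi}{27}=1+\frac{4\sqrt{3}\pi}{27},
\]
which is the claimed identity.

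The main obstacle is the integration step, specifically recognizing that the substitution $x=4\sin^2\theta$ turns $2(4-x)^{3/2}/\sqrt{x}$ into a pure polynomial in $\cos\theta$, and then cleanly packaging the back-substitution so that the antiderivative lands in the algebraic form displayed in the paper. The convergence check and the final evaluation at $x=1$ using $\arcsin(1/2)=\pi/6$ are routine by comparison.
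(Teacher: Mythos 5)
Your proposal is correct and follows the paper's own route: take the ODE (\ref{eq:ode}) with $f(0)=0$, solve it to obtain the closed form displayed before the theorem, and evaluate at $x=1$ using $\arcsin(1/2)=\pi/6$. The details you supply (integrating factor $(4-x)^{5/2}/\sqrt{x}$, the substitution $x=4\sin^2\theta$, and the radius-of-convergence check justifying evaluation at $x=1$) are all accurate and simply make explicit the solving step the paper leaves to the reader.
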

Then by the above equation, we have the lower bound $L^S_2=1-\frac{4\sqrt{3}\pi}{27}\approx 0.194$. 

Similarly, for any given $k$, lower bounds for $b_{n,k}$, that is, lower bound for the proportion of minimal overlapping patterns in $\mathcal{ST}(n^k)$ is 
$$
L_k^S=1-\sum_{i\geq 2}\frac1{Y_{i^k}},
$$
where $Y_{i^k}=|\mathcal{ST}(i^k)|$ which could be counted by the hook-length formula. We computed the following  approximations for $L_k$  for $k=3,4,5,6$.
$$
L_{3}^S\approx 0.774, \phantom{123}L_{4}^S\approx 0.926, \phantom{123}L_{5}^S\approx    0.976
, \phantom{123}L_{6}^S\approx 0.992.
$$
It is easy for one to show that $\lim\limits_{k\rightarrow \infty}L_{k}^S=1$ which implies almost every pattern in $\mathcal{ST}(n^k)$ is minimal overlapping as $k$ is very large.

Next we shall find an upper bound for the proportion of minimal overlapping standard tableaux of shape $n^2$. This is equivalent to finding a lower bound for 
the proportion of overlapping patterns. For a pattern $P\in \mathcal{ST}(n^2)$, $n\geq 3$, we only consider patterns where the reduction of the first two columns is the same as the reduction of the last two columns. There are only two cases for such overlapping patterns. For Case 1 (Figure \ref{fig:ubcase1}), the number of such overlapping patterns in $\mathcal{ST}((n+2)^2)$ is equal to $|\mathcal{ST}(n^2)|$. 
\begin{figure}[htbp]
	\begin{center}
		\scalebox{1.1}{\includegraphics{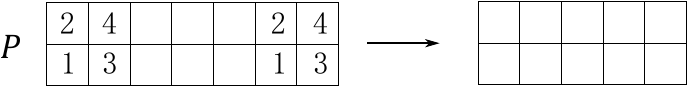}}
		\caption{Reducing via the patterns in the first two columns for Case 1}
		\label{fig:ubcase1}
	\end{center}
\end{figure}

For Case 2 (Figure \ref{fig:ubcase2}), the number of such overlapping patterns in $\mathcal{ST}((n+2)^2)$ is equal to the number of standard Young tableaux of skew shape  $(n+2,n) / (2)$.
\begin{figure}[htbp]
	\begin{center}
		\scalebox{1.1}{\includegraphics{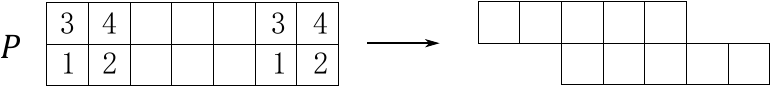}}
		\caption{Reducing via the patterns in the first two columns for Case 2}
		\label{fig:ubcase2}
	\end{center}
\end{figure}
It is easy to see that the number of standard tableaux of shape 
 $(n+2,n) / (2)$ is equal to the number of standard tableaux of 
shape $(n+2,n)$ where 1 and 2 lie in the first row. But then 
the number of standard tableaux of 
$(n+2,n)$ where 1 and 2 lie in the first row is equal to the number of 
standard tableaux of shape $(n+2,n)$ minus the number of standard tableaux 
of shape $(n+2,n)$ where 1 and 2 lie first column.  But 
then the number of standard tableaux 
of shape $(n+2,n)$ where 1 and 2 lie first column is equal to the number of standard tableaux 
of shape $(n+1,n-1)$. Using the hook length formula for the number of 
standard tableaux,  it follows that 
\begin{eqnarray*}
|\mathcal{ST}(n+2,n)/(2)| &=& |\mathcal{ST}(n+2,n)|- |\mathcal{ST}(n+1,n-1)| \\
&=& \frac{3(2n+2)!}{(n+3)!n!} - \frac{3(2n)!}{(n+2)!(n-1)!} \\
&=& \frac{9n^2+9n+6}{(n+3)(n+2)(n+1)}\binom{2n}{n}.
\end{eqnarray*}
Since the lower bound for number of overlapping patterns in $\mathcal{ST}((n+2)^2)$ is $|\mathcal{ST}(n^2)|+|\mathcal{ST}((n+2,n) / (2))|$, an upper bound for $b_{n+2,2}$, $n\geq 3$ is
\begin{eqnarray*}
U_{n+2,2}^S &=&1-\frac{|\mathcal{ST}(n^2)|+|\mathcal{ST}((n+2,n) / (2))|}{|\mathcal{ST}((n+2)^2)|}\\
&=&\frac{3 n^2+9n}{8 n^2+16n+6}.
\end{eqnarray*}

As $n$ goes to infinity, $U_{n+2,2}^S$ converges to $0.375$. Therefore, $\{b_{n,2}\}$ is asymptotically between $0.194$ and $0.375$.

\section{Generalized Euler permutations}
Elements in $\mathcal{C}_{=k[n-1]}$ are called generalized Euler permutations 
or $k$-up-down permutations. Note that 
$2$-up-down permutations are usually called up-down permutations. 
For example,  $\sigma=2~3~5~8~1~4~6~7$ is a $4$-up-down permutation of length $8$ and $\mu=1~5~2~6~3~4$ is an up-down permutation permutation of length $6$.

Next we give a lower bound $L^E_{k}$ for $\{c_{n,k}\}_{n\geq 2}$, that is, 
the proportion of minimal overlapping patterns in $\mathcal{C}_{=k[n-1]}$. Using essentially  the same argument that we used to prove 
Theorem \ref{thm:Lk}, we have a lower bound $L^E_{k}$
$$
L^E_k=1-\sum_{j\geq 2}\frac{1}{|\mathcal{C}_{=k[j-1]}|},
$$
where $|\mathcal{C}_{=k[j-1]}|$ is the number of $k$-up-down permutations of length $kj$. The generating function of $|\mathcal{C}_{=k[j-1]}|$ is
$$\sum_{n \geq 0}  |\mathcal{C}_{=k[j-1]}| \frac{t^{kn}}{(kn)!} = 
\frac{1}{\sum_{n \geq 0} \frac{(-1)^n t^{kn}}{(kn)!}}.$$ 
See Stanley's book, Chapter 3, page 389 \cite{Stan}. 
It is apparent that for fixed $k$, $L^E_k$ converges so we can calculate numerical approximation
\begin{eqnarray*}
	L^E_2&=&1-\sum_{j\geq 2}\frac{1}{|\mathcal{C}_{=2[j-1]}|}=1-\frac{1}{5}-\frac{1}{61}-
	\frac{1}{1382}-\frac1{50521}-\cdots\approx 0.783,\\
L^E_3&=&1-\sum_{j\geq 2}\frac{1}{|\mathcal{C}_{=3[j-1]}|}=1-\frac{1}{19}-\frac{1}{1513}-
\frac{1}{315523}-\cdots\approx 0.947,\\
L^E_4&=&1-\sum_{j\geq 2}\frac{1}{|\mathcal{C}_{=4[j-1]}|}=1-\frac{1}{69}-\frac{1}{33661}-
\frac{1}{60376809}-\cdots\approx 0.985,\\
L^E_5&=&1-\sum_{j\geq 2}\frac{1}{|\mathcal{C}_{=5[j-1]}|}=1-\frac{1}{251}-\frac{1}{750751}-
\frac{1}{11593285251}-\cdots\approx 0.996.
\end{eqnarray*}
It is easy for one to show that $\lim\limits_{k\rightarrow \infty}L^E_k=1$ which means almost all patterns in $\mathcal{C}_{=k[n-1]}$ are minimal overlapping when $k$ is large.

Next we will find an upper bound for $c_{n,2}$. It is well-known that numbers of up-down permutations are Euler numbers. Suppose $E(n)$ is the $n$th Euler number, then $|\mathcal{C}_{=2[n-1]}|=E(2n)$. By \cite{S}, the ratio of two adjacent Euler numbers have asymptotic estimation as follows
\begin{equation}\label{eq:asym}
\frac{E(n+1)}{E(n)} \sim \frac{2(n+1)}{\pi}.
\end{equation}

To find an upper bound of minimal overlapping patterns, we only need to find a lower bound for overlapping up-down permutations. Similar to Section \ref{Sec:SYT}, we only consider  patterns where the reduction of the first two columns is the same as the reduction of the last two columns. In other words, we only consider prefixes and suffixes of length $4$ in an up-down permutation and they have the same reduction.  In \cite{PR1}, the number of such overlapping up-down permutations of length $2n$ is given as
$$
13E(2n)-32nE(2n-1)+10n(2n-1)E(2n-2),
$$
where $n\geq 4$.
Hence we can get an upper bound for the percentage of minimal overlapping patterns in $\mathcal{C}_{=2[n-1]}$.
$$
U^E_{n,2}=1-\frac{13E(2n)-32nE(2n-1)+10n(2n-1)E(2n-2)}{E(2n)}.
$$

Applying (\ref{eq:asym}), we get an asymptotic upper bound 
$$
U^E_2=\lim\limits_{n\rightarrow \infty}U^E_{n,2}=8\pi-\frac{5}{4}\pi^2-12\approx 0.795.
$$
Therefore, $\{c_{n,2}\}$ is asymptotically between $0.783$ and $0.795$.

\section{Open questions}

A natural extension of our results would be to find the 
proportion of minimal overlapping patterns in 
$\mathcal{F}_{n,k}$ ($\mathcal{C}_{=k[n-1]}$, $\mathcal{ST}(n^k)$) 
whose first $j$ columns is equal to some fixed $P \in \mathcal{F}_{j,k}$ 
among all the elements of $\mathcal{F}_{n,k}$ ($\mathcal{C}_{=k[n-1]}$, $\mathcal{ST}(n^k)$) whose first $j$ columns is equal to $P$. For example, in 
the simplest case, we would be interested in the question of 
how many permutations in $S_n$ starting with $m$ are minimal overlapping.

It is not hard to see that for a fixed $m$, as $n$ approaches to infinity, there are at least $(3-e)(n-1)!$ permutations in $S_n$ starting with $m$ are minimal overlapping. Similar to Theorem \ref{thm:limit}, we can show that the proportion of minimal overlapping patterns in $S_n$ starting with $m$ converges as $n$ goes to infinity.

We have used Monte Carlo methods to estimate  the limit of the proportion of minimal overlapping permutations starting among all permutations 
that start with $m$ for $m=1,2,3,4,5,6,7$. Our computations yielded 
the following estimates:  
$0.392$, $0.384$, $0.375$, $0.368$, $0.365$, $0.361$ and $0.358$ respectively. Noting that the sequence is monotone decreasing, we ask whether 
this is true in general.  That is, if $1 \leq a < b \leq \lceil\frac{n}{2}\rceil$, is it the 
case that as $n$ approaches infinity, is the proportion of 
minimal overlapping permutations in $S_n$ that start with $a$ among all the 
permutations of $S_n$ that start with $a$ greater than the proportion of 
minimal overlapping permutations in $S_n$ that start with $b$ among  all the 
permutations of $S_n$ that start with $b$?

\end{document}